\theoremstyle{plain}
\newtheorem{theorem}{Theorem}[section]
\newtheorem{lemma}[theorem]{Lemma}
\theoremstyle{definition}
\newtheorem{definition}{Definition}
\theoremstyle{remark}
\begin{document}

\title{About convergence of solutions of one-dimensional stochastic equations}

\author{\textbf{Ivan H. Krykun}\thanks{This work was partially supported by a grant from the Simons Foundation (Award 1160640, Presidential Discretionary-Ukraine Support Grants, Ivan Krykun).} \textsuperscript{ a), b)}\\
\textsuperscript{a)} Institute of Applied Mathematics and Mechanics \\of NAS of Ukraine, Sloviansk, Ukraine\\
\textsuperscript{b)} KROK University, Kyiv, Ukraine\\
e-mail: iwanko@i.ua.}

\maketitle

\begin{abstract}
We consider a random process as a solution of stochastic differential equations with dependence of the coefficients on small parameter $\varepsilon$ and we suppose that the drift coefficients of these equations are unbounded on the parameter $\varepsilon$. We consider more general requirements on the convergence of some functions of coefficients of stochastic equations to limit functions. Necessary and sufficient conditions for the weak convergence of solutions of such stochastic equations if $\varepsilon$ tends to zero to a some stochastic equations involving a local time of process are obtained.
\end{abstract}

\begin{keywords}
Random process; stochastic equation; local time; convergence of measures.
\end{keywords}

\section{Introduction}

It is well known (see \cite{StrVar79}, \cite{Makhno92}) that convergence of coefficients of stochastic differential equations is not sufficed for weak convergence of solutions of Ito's stochastic equation. It is necessary additional condition for diffusion coefficient of stochastic equation. For example, in \cite[theorem 11.3.3]{StrVar79} authors supposed that diffusion coefficient satisfies the Lipschitz condition.

G. Kulinich \cite{Kul83} and M. Portenko \cite{Por90} considered Ito's stochastic equation
$$x_\varepsilon(t)=x+\int_0^tb_\varepsilon(x_\varepsilon(s))ds+
\int_0^t\sigma_\varepsilon(x_\varepsilon(s))dw(s).$$

In \cite{Kul83} the necessary and sufficient conditions for weak convergence of some functions of solutions these equations to one-dimensional Ito's stochastic equations and for weak convergence of processes $x_\varepsilon$ to generalized diffusion processes, which were defined by M.~Portenko in \cite{Por76}, were obtained. In \cite[\S 3, Chapter III]{Por90} a weak convergence of function $b_\varepsilon (x)$ to Dirac $\delta-$function concentrated at zero in case of $\sigma_\varepsilon (x)\equiv 1$ and was proved weak convergence of processes $x_\varepsilon$ to generalized diffusion processes was considered. S.~Makhno in \cite{Makhno00}, \cite{Makhno04} proved requirements for convergence of measures generated by solutions of Ito's stochastic equations to the measure generated by solutions of some stochastic equations with local time.

In this paper we also consider a stochastic equation involving local time. It was firstly investigated by J. Harrison \& L. Shepp \cite{HarShe81} and J.-F. Le Gall \cite{Leg83}. In these works solution of stochastic process with local time was connected with skew Brownian motion, defined by K. Ito \& H. McKean \cite{ItoMck63}. Moreover in papers J.-F. Le Gall \cite{Leg83}, H.~Engelbert \& W. Schmidt \cite{EngSch91}, S. Makhno \cite{Makhno02} formulae which connect solutions of stochastic equations with local time with solutions of Ito's stochastic equations were obtained.

These problems and connected topics were discussed also in the works by S. Makhno \cite{Makhno05}, \cite[][Chapters 3-5]{Makhno12}, I. Krykun \cite{Krk05, Krk13-2, Krk17, Krk23} and I. Krykun \& S. Makhno \cite{KrkMah13}. Book by S. Makhno \cite{Makhno12}, as well as a book by J. Jacod \& A. Shiryaev \cite{JacShir87} also contain more detailed review of results of other authors.

In this article we consider Ito's stochastic equations with nonregular dependence of the coefficients on small parameter $\varepsilon$ and stochastic equations involving local time. Unlike the results of other authors we assume more general requirements on the convergence of some functions of coefficients of stochastic equations to their limit functions.

\section{Definition and notations}

We will use standard notation $(\Omega, \mathfrak{F}, \mathfrak{F}_{t}, \mathsf{P})$ for probability space with flow of $\sigma$-algebras $\mathfrak{F}_{t}, t\in [0, T]$ and $(w(t), \mathfrak{F}_{t})$ for the standard one-dimensional Wiener process with respect to the filtration $\mathfrak{F}_{t}$.

The indicator function of a set $A$ is denoted by $I_A(x)$.

Let us consider one-dimensional stochastic equation involving a local time
\begin{equation}\label{eqlch}
\xi(t)=x+\beta L^{\xi}(t,0)+\int_0^tb(\xi(s))ds+\int_0^t\sigma(\xi(s))dw(s),
\end{equation}
where $\beta$ be a constant and $b(x), \sigma(x)$ be a measurable functions.

\begin{definition}
According to \cite[definition 4.7 (1)]{EngSch91}, the equation (\ref{eqlch}) has
{\it a weak solution}, if for the given functions $b(x)$, $g(x)$ and
$\sigma(x)$ and the constant $\beta$ there is a probability space
$(\Omega, \mathfrak{F}, \mathfrak{F}_t, \mathsf{P})$ with flow of
$\sigma$-algebras $\mathfrak{F}_t, $ $t\geq0$, continuous semimartingale
$(\xi(t), \mathfrak{F}_t)$ and standard one-dimensional Wiener
process $(w(t), \mathfrak{F}_t)$ such that local time $L^\xi(t, 0)$ defined as
\begin{equation}\label{e3.1.3}
L^\xi(t, 0)=\lim_{\delta\rightarrow0}\frac{1}{2\delta}\int_0^tI_
{(-\delta,\delta)}(\xi(s))\sigma^2(\xi(s))ds
\end{equation}
exists almost surely and equation (\ref{eqlch}) fulfills almost surely.
\end{definition}

Let $(\mathbb{C}[0,T], C_t), t\in[0, T]$ be a space of continuous functions on interval $[0, T]$. Let us denote as $\mu_\varepsilon$ the measure on the functional space $(\mathbb{C}[0, T], C_t)$ generated by the process $\xi_\varepsilon$.

\begin{definition}
We study convergence of measures on functional space $(\mathbb{C}[0, T], C_t)$ generated by some random process. Such kind of convergence we will denote as $\Rightarrow$ and will name {\it a weak convergence of random process}.
\end{definition}

\begin{definition} \emph{The symmetric derivative of function} $h(x)$ we denote as $\mathbb{D}h(x)$ and it is defined as
$$\mathbb{D}h(x)=\lim_{\varepsilon\rightarrow0}\frac{h(x+\varepsilon)-h(x-\varepsilon)}{2\varepsilon}\;;$$
and \emph{the second distributional derivative of function} $h(x)$  we denote as $\mathbf{n}_h (dx)$ and it is defined for every infinitely differentiable function $H(x)$ with compact support by the equality
$$\int\frac{d^2 H(x)}{dx^2}h(x)dx=\int H(x)\mathbf{n}_{h}(dx).$$
\end{definition}

Let us denote
$$
sgnx=\left \{
\begin{aligned}
1,&\; \text{ for } \, x>0,\\
0,&\; \text{ for }\, x=0,\\
-1,& \; \text{ for } \, x<0.
\end{aligned}
\right.
$$

For the pair of measurable functions $(f(x), g(x))$ we will denote as
$$(f, g) \in \mathcal{L}(\lambda, \Lambda)$$
if there exist constants $\lambda, \Lambda$ such as $0<\lambda\leq\Lambda<\infty$ and the following inequalities
$$|f(x)|\leq\:\Lambda\:,\; \lambda\leq g(x)\leq\:\Lambda\:$$
are fulfilled for every $x$.


\section{Auxiliary results}

Let us consider a random process $X(t)$ as a solution of the following Ito's stochastic equation
\begin{equation}\label{itoeq}
X(t)=X(0)+\int_0^t{\alpha(X(s))}ds+\int_0^t{\gamma(X(s))}dw(s).
\end{equation}
We suppose that the pair of function $(\alpha, \gamma^2)\in \mathcal{L}(\lambda,\Lambda)$, then by \cite[theorem 4.35]{EngSch91} equation (\ref{itoeq}) has unique weak solution.

It follows from \cite[formula 4.3]{EngSch91}, that if for function $f(x)$ there exist derivatives $\mathbb{D}f(x)$ and $\mathbf{n}_f(dx)$,
then Tanaka's formula (generalized Ito's formula)
\begin{equation}\label{tanaka}
f(X(t))=f(X(0)) + \int^t_0 \mathbb{D}f(X(s))dX(s) +\frac{1}{2}\int L^X(t, y)\mathbf{n}_f(dy)
\end{equation}
is fulfilled.

Let functions $u_i(x),$ $i=1,2$ are some twice differentiable functions and let they have such properties:
$$u_1(0)=u_2(0)=0, u'_1(x)>0, \,\,u'_2(x)>0.$$
We define function $u(x)$ as follows
\begin{equation}\label{funcu}
u(x)=\left \{
\begin{aligned}
u_1(x),&  \text{ if }\, x\leq0,\\
u_2(x),&  \text{ if }\, x\geq0,
\end{aligned}\right.
\end{equation}
Further we denote $u_i'(0)=u_i$ and $v(x)=u^{-1}(x)$ be an inverse function of function $u(x)$. Then we obtain
$$\mathbb{D}u(x)=\frac{1}{2}(u'_2(x)+u'_1(x))+\frac{1}{2}(u'_2(x)-u'_1(x))sgn x ;$$
$$\mathbf{n}_u(dx)=(u_2-u_1)\delta_0(x)dx+\mathbb{A}_u(x)dx ;$$
where we define the function $\mathbb{A}_u(x)$ as follows
\begin{equation}\label{funcA}
\mathbb{A}_u(x)=\frac{1}{2}[(u''_2(x)+u''_1(x))+(u''_2(x)-u''_1(x))sgn x],
\end{equation}
and $\delta_0(x)$ is Dirac $\delta-$function concentrated at zero.

\begin{lemma}\label{l1}
{\it We assume that the equality $Y(t)=u(X(t))$ takes place. Then the following equalities
\begin{equation}\label{l1.1} L^Y(t,0)=\frac{u_1+u_2}{2}L^X(t,0), \end{equation}
\begin{equation}\label{l1.2} Y(t)=Y(0)+\frac{u_2-u_1}{u_2+u_1}L^Y(t,0)+\int_0^t{\widehat{\alpha}(Y(s))}ds+
\int_0^t{\widehat{\gamma}(Y(s))}dw,\end{equation}
are fulfilled, where function $\mathbb{A}_u(x)$ is defined by (\ref{funcA}), and we denote}
$$\widehat{\alpha}(x)=\mathbb{D}u(v(x))\alpha(v(x))+\frac{1}{2}\gamma^2(v(x))\mathbb{A}_u(v(x))\;,$$
$$\;\widehat{\gamma}(x)=\mathbb{D}u(v(x))\gamma(v(x)).$$

\end{lemma}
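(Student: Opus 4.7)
The strategy is to apply the generalized Tanaka formula (\ref{tanaka}) with $f=u$ to the Ito process $X(t)$ and then separately establish the local-time identity (\ref{l1.1}) so that the $L^X(t,0)$ term produced by Tanaka can be repackaged as the prescribed multiple of $L^Y(t,0)$. Substituting the explicit expressions for $\mathbb{D}u$ and $\mathbf{n}_u$ given in the excerpt, and splitting the atomic and absolutely continuous parts of $\mathbf{n}_u$, gives
\[
Y(t)=Y(0)+\int_0^t\mathbb{D}u(X(s))\bigl[\alpha(X(s))ds+\gamma(X(s))dw(s)\bigr]+\frac{u_2-u_1}{2}L^X(t,0)+\frac{1}{2}\int L^X(t,y)\mathbb{A}_u(y)dy.
\]
The last integral is converted into a time integral via the occupation times formula $\int f(y)L^X(t,y)dy=\int_0^tf(X(s))\gamma^2(X(s))ds$, and substituting $X(s)=v(Y(s))$ in all the Lebesgue and stochastic integrals produces $\widehat{\alpha}$ and $\widehat{\gamma}$ by definition, yielding
\[
Y(t)=Y(0)+\frac{u_2-u_1}{2}L^X(t,0)+\int_0^t\widehat{\alpha}(Y(s))ds+\int_0^t\widehat{\gamma}(Y(s))dw(s).
\]

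To upgrade this to (\ref{l1.2}) I need (\ref{l1.1}), which I prove from the definition (\ref{e3.1.3}) applied to $Y$ with diffusion coefficient $\widehat{\gamma}$. Since $u$ is a strictly increasing bijection with $u(0)=0$, the event $\{Y(s)\in(-\delta,\delta)\}$ coincides with $\{X(s)\in(v(-\delta),v(\delta))\}$, and $\widehat{\gamma}^2(Y(s))=[\mathbb{D}u(X(s))]^2\gamma^2(X(s))$; applying the occupation times formula for $X$ gives
\[
\frac{1}{2\delta}\int_0^tI_{(-\delta,\delta)}(Y(s))\widehat{\gamma}^2(Y(s))ds=\frac{1}{2\delta}\int_{v(-\delta)}^{v(\delta)}[\mathbb{D}u(x)]^2L^X(t,x)dx.
\]
Splitting the right-hand side at $x=0$, on $(0,v(\delta))$ the integrand equals $(u_2'(x))^2L^X(t,x)$, and the Taylor expansion $v(\delta)=\delta/u_2+o(\delta)$ together with continuity of $u_2'$ and of $x\mapsto L^X(t,x)$ at $0$ forces this contribution to converge to $\tfrac{u_2}{2}L^X(t,0)$; the symmetric analysis on $(v(-\delta),0)$ gives $\tfrac{u_1}{2}L^X(t,0)$, so summing and passing to the limit yields (\ref{l1.1}).

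Substituting $L^X(t,0)=\tfrac{2}{u_1+u_2}L^Y(t,0)$ from (\ref{l1.1}) into the semimartingale representation of the first paragraph converts the coefficient of the local time into $\tfrac{u_2-u_1}{2}\cdot\tfrac{2}{u_1+u_2}=\tfrac{u_2-u_1}{u_1+u_2}$, which is exactly (\ref{l1.2}). I expect the main obstacle to be (\ref{l1.1}): because $u'$ has a jump at the origin, the neighborhood $(v(-\delta),v(\delta))$ shrinks at different rates on the two sides, so one must handle the two one-sided contributions separately and invoke continuity of $x\mapsto L^X(t,x)$ at $0$ (which is standard under the hypothesis $(\alpha,\gamma^2)\in\mathcal{L}(\lambda,\Lambda)$). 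Once (\ref{l1.1}) is established, everything else is routine bookkeeping of Tanaka's formula and the occupation times formula.
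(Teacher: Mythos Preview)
Your argument is correct, and the derivation of the semimartingale decomposition of $Y$ from Tanaka plus occupation times matches the paper's first step (its formula (\ref{l1.3})) essentially verbatim. The difference lies entirely in how (\ref{l1.1}) is established.

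The paper does not go through the occupation-density definition (\ref{e3.1.3}). Instead it applies Tanaka's formula a second time, now to the function $\overline{u}(x)=|u(x)|$ and the process $X$, obtaining a representation of $|Y(t)|$ with local-time term $\tfrac{u_1+u_2}{2}L^X(t,0)$. It then writes $L^Y(t,0)=|Y(t)|-|Y(0)|-\int_0^t \operatorname{sgn}Y(s)\,dY(s)$, substitutes the decomposition (\ref{l1.3}) for $dY$, and carries out about a page of $\operatorname{sgn}$-algebra to show that every remaining term collapses to an integral against $I_{\{0\}}(X(s))$, which vanishes because $\int_0^t I_{\{0\}}(X(s))\,ds=0$ and $\int_0^t I_{\{0\}}(X(s))\,dX(s)=0$.

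Your route via (\ref{e3.1.3}) is shorter and conceptually cleaner: it reduces (\ref{l1.1}) to the elementary asymptotics of $\tfrac{1}{2\delta}\int_{v(-\delta)}^{v(\delta)}[\mathbb{D}u(x)]^2L^X(t,x)\,dx$ with the two one-sided pieces scaling like $\delta/u_2$ and $\delta/u_1$. The price is that you must invoke continuity of $x\mapsto L^X(t,x)$ at $0$; this is indeed available here (the finite-variation part of $X$ is absolutely continuous, so the local-time field has no jumps), and you correctly flag it. The paper's approach avoids citing that regularity explicitly but trades it for a much longer computation and ultimately uses the equivalent fact $\int_0^t I_{\{0\}}(X(s))\,ds=0$. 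Either argument is complete.
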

\begin{proof} For the function $u(x)$ and the process $X(t)$ by the Tanaka's formula (\ref{tanaka}) we obtain:
\begin{equation}\label{l1.3}
\begin{aligned} Y(t)=Y(0)+&\int_0^t\Big[\mathbb{D}u(X(s))\alpha(X(s))+
\frac{1}{2}\gamma^2(X(s))\mathbb{A}_u(X(s))\Big]ds+\\
+&\int_0^t\mathbb{D}u(X(s))\gamma(X(s))dw+\frac{u_2-u_1}{2}L^X(t,0).
\end{aligned}
\end{equation}

Since $X(s)=v(Y(s))$ then equality (\ref{l1.2}) follows from (\ref{l1.3}) and (\ref{l1.1}). Let us prove (\ref{l1.1}).
For this purpose we will apply the formula (\ref{tanaka}) to the process (\ref{itoeq}) and the function $\overline{u}(x)=|u(x)|$. We get :
$$\mathbb{D}\overline{u}(x)=\frac{1}{2}\big(u'_2(x)-u'_1(x)\big)+
\frac{1}{2}\big(u'_2(x)+u'_1(x)\big)sgn x ;$$
$$\mathbf{n}_{\overline{u}}(dx)=(u_2+u_1)\delta_0(x)dx+\widehat{A}_u(x)dx;$$
here we denote the function $\widehat{A}_u(x)$ as follows
$$\widehat{A}_u(x)=\frac{1}{2}\Big[\big(u''_2(x)-u''_1(x)\big)+\big(u''_2(x)+u''_1(x)\big)sgn x\Big].$$

Thus we get:
\begin{equation}\label{l1.4}
\begin{aligned}
\hskip 10 pt |Y(t)|=|Y(0)|+&\int\limits_0^t\Big[\mathbb{D}\overline{u}(X(s))\alpha(X(s))+\frac{1}{2}\gamma^2(X(s))
\widehat{A}_u(X(s))\Big]ds+\\
+&\int\limits_0^t{\mathbb{D}\overline{u}(X(s))\gamma(X(s))}dw+\frac{u_2+u_1}{2}L^X(t,0)\;.
\end{aligned}
\end{equation}
By formulae (\ref{tanaka}) and (\ref{l1.4}) we get
$$\begin{aligned}
L^Y(t,0)=&\frac{u_2+u_1}{2}L^X(t,0)+\int_0^t[\mathbb{D}\overline{u}(X(s))\alpha(X(s))+\\
+\frac{1}{2}\gamma^2(X(s))\widehat{A}_u(X(s))]ds+&\int_0^t{\mathbb{D}\overline{u}(X(s))\gamma(X(s))}dw-\int_0^t{sgn Y(s)}dY(s).
\end{aligned}
$$

The last integral on the right-hand side of this equation we derive from the formula (\ref{l1.3})
and we use such property of processes that $sgn X(s)=sgn Y(s)$ :
\begin{equation}\label{l1.5}
\aligned &\hskip 55 ptL^Y(t,0)=\frac{u_2+u_1}{2}L^X(t,0)+\\
&+\int_0^t\Big[\mathbb{D}\overline{u}(X(s))
\alpha(X(s))+\frac{1}{2}\gamma^2(X(s))\widehat{A}_u(X(s))\Big]ds+\\
&+\int_0^t{\mathbb{D}\overline{u}(X(s))\gamma(X(s))}dw+\int_0^t{\frac{u_2-u_1}
{2}sgn X(s)L^X(ds,0)}-\\
&-\int_0^t{sgn X(s)\Big[\mathbb{D}u(X(s))\alpha(X(s))+\frac{1}{2}\gamma^2(X(s))\mathbb{A}_u(X(s))\Big]}ds-\\
&-\int_0^t{sgn X(s)\mathbb{D}u(X(s))\gamma(X(s))}dw.
\endaligned
\end{equation}

Since the function $L^X(s, 0)$ increases only at the points $s$ such as $X(s)=0$ then
$$\int\limits_0^t{sgn X(s)L^X(ds,0)}=0.$$

Further we will group terms of the equation (\ref{l1.5}) and we will continue transformation of (\ref{l1.5}):
$$\begin{aligned}
&\int_0^t\Bigg[\mathbb{D}\overline{u}(X(s))\alpha(X(s))+\frac{1}{2}\gamma^2(X(s))\widehat{A}_u(X(s))-\\
&-sgn X(s)\Big[\mathbb{D}u(X(s))\alpha(X(s))+\frac{1}{2}\gamma^2(X(s))\mathbb{A}_u(X(s))\Big]\Bigg]ds+\\
&+\int_0^t\Big[\mathbb{D}\overline{u}(X(s))\gamma(X(s))-sgn X(s)\mathbb{D}u(X(s))\gamma(X(s))\Big]dw=
\end{aligned}$$
$$\begin{aligned}
=\int_0^t&\Bigg[\alpha(X(s))\bigg(\frac{u'_2(X(s))-u'_1(X(s))}{2}+\frac{u'_2(X(s))+u'_1(X(s))}{2}sgn X(s)-\\
&-\Big(\frac{u'_2(X(s))+u'_1(X(s))}{2}+\frac{u'_2(X(s))-u'_1(X(s))}{2}sgn X(s)\Big)sgn X(s)\bigg)+\\
&+\frac{1}{4}\gamma^2(X(s))\bigg\{\Big(\big(u''_2(X(s))-u''_1(X(s))\big)+\big(u''_2(X(s))+u''_1(X(s))\big)sgn X(s)\Big)-\\
&-\Big(\big(u''_2(X(s))+u''_1(X(s))\big)+\big(u''_2(X(s))-u''_1(X(s))\big)sgn X(s)\Big)\bigg\}sgn X(s)\Bigg]ds+\\
+\int_0^t&\gamma(X(s))\bigg[\frac{u'_2(X(s))-u'_1(X(s))}{2}+\frac{u'_2(X(s))+u'_1(X(s))}{2}sgn X(s)-\\
&-\Big(\frac{u'_2(X(s))+u'_1(X(s))}{2}+\frac{u'_2(X(s))-u'_1(X(s))}{2}sgn X(s)\Big)sgn X(s)\bigg]dw=
\end{aligned}$$
$$\begin{aligned}
&=\int_0^t\bigg(\alpha(X(s))I_{\{0\}}(X(s))\frac{u'_2(X(s))-u'_1(X(s))}{2}+\\
&\hskip 30 pt +\frac{1}{4}\gamma^2(X(s))I_{\{0\}}(X(s))\big(u''_2(X(s))-u''_1(X(s))\big)\bigg)ds+\\
&+\int_0^t{I_{\{0\}}(X(s))\frac{u'_2(X(s))-u'_1(X(s))}{2}\gamma(X(s))}dw=
\end{aligned}$$
$$\begin{aligned}
=\frac{1}{4}&\int_0^t{I_{\{0\}}(X(s))\gamma^2(X(s))(u''_2(X(s))-u''_1(X(s))}ds+\\
+&\int_0^t {I_{\{0\}}(X(s))\frac{u'_2(X(s))-u'_1(X(s))}{2}}dX(s)=
\end{aligned}$$
$$=\frac{1}{4}\gamma^2(0)\big(u''_2(0)-u''_1(0)\big)\int_0^t
{I_{\{0\}}(X(s))ds}+\frac{u_2-u_1}{2}\int_0^t{I_{\{0\}}(X(s))}dX(s)=0\:,$$
since $\displaystyle\int_0^t{I_{\{0\}}(X(s))}dX(s)=0$ (it follows from \cite[Lemma 5, p. 590]{GihSkor82}) and also since the following result:
$$\int_0^t{I_{\{0\}}(X(s))}dX(s)=\int_0^t{I_{\{0\}}(X(s))\frac
{\alpha(X(s))}{\gamma^2(X(s))}\gamma^2(X(s))}ds=$$
$$=\int_{\mathbb{R}}{I_{\{0\}}(y)\frac{\alpha(y)}{\gamma^2(y)}}L^X(t,y)dy=0$$
takes place. Here the first equality is a result from \cite[p. 217]{RevYor90} and the last one follows from \cite[Chapter VI, Corollary 1.6]{RevYor90}.
\end{proof}

\begin{definition} For some constant $|\beta|<1$ we define a function $\kappa(x)$ as follows
\begin{equation}\label{kappa}
\kappa(x)=\left\{
\begin{aligned}
(1-\beta)x, &\;\;\text{ if  } x<0 \\
(1+\beta)x, & \;\;\text{ if  } x\geq0
\end{aligned}\right.
\end{equation}
and a function $\varphi(x)=\left\{\aligned
\frac{x}{1-\beta}, \quad & \text{ if  }x<0 \\
\frac{x}{1+\beta},\quad & \text{ if  }x\geq0
\endaligned \right.$ is the inverse function of $\kappa(x)$. Furthermore, for every function $f$ we will denote a function $\widetilde{f}$ as follows
\begin{equation}\label{kappafunc}
\widetilde{f}(x)=\frac{f(\kappa(x))}{1+\beta sgn x}.
\end{equation}
\end{definition}
\medskip

Further we consider two following equations
\begin{equation}\label{eqxi}
\xi(t)=x+\beta L^\xi(t,0)+\int_0^t{g(\xi(s))}ds+
\int_0^t{\sigma(\xi(s))}dw(s),
\end{equation}
\begin{equation}\label{eqzeta}
\zeta(t)=\varphi(x)+\int_0^t{\widetilde{g}(\zeta(s))}ds+\int_0^t{\widetilde
{\sigma}(\zeta(s))}dw(s).
\end{equation}

For the measurable functions $g(x), \sigma(x)$ we introduce the following condition.

\medskip

{\bf Condition (I).}
\begin{itemize}
\item[$I_1.$] The absolute value of the constant at the local time is strictly less than 1.

\item[$I_2.$] The pair of the function $(g, \sigma^2)\in \mathcal{L}(\lambda, \Lambda)$.
\end{itemize}
\medskip

It's well known \cite[theorem 4.35]{EngSch91}, \cite{Ver79} that if Condition (I) holds then both the equation (\ref{eqxi}) and the equation (\ref{eqzeta}) have unique weak solutions. The next important lemma follows directly from the lemma \ref{l1}.

\begin{lemma}\label{l2}
{\it We assume that the coefficients of the process (\ref{eqxi}) satisfy the Condition (I). Then the process $\kappa(\zeta(t))$ is a solution of the equation (\ref{eqxi}).}
\end{lemma}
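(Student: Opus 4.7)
The plan is to apply Lemma \ref{l1} with $X := \zeta$ and $u := \kappa$, and to verify that the resulting equation for $Y(t) := \kappa(\zeta(t))$ coincides term by term with (\ref{eqxi}).

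First I would check the hypotheses of Lemma \ref{l1}. The function $\kappa$ splits as in (\ref{funcu}) with $u_1(x) = (1-\beta)x$ on $\{x \le 0\}$ and $u_2(x) = (1+\beta)x$ on $\{x \ge 0\}$. Both pieces are trivially twice differentiable with $u_1(0) = u_2(0) = 0$, and $u_1'(x) = 1-\beta > 0$, $u_2'(x) = 1+\beta > 0$ since $|\beta| < 1$ by Condition (I). Moreover $u_1''=u_2''=0$, so by (\ref{funcA}) the function $\mathbb{A}_\kappa(x)$ vanishes identically. In order to play the role of the Ito process (\ref{itoeq}), the process $\zeta$ is viewed with drift $\alpha := \widetilde{g}$ and diffusion $\gamma := \widetilde{\sigma}$; since $1+\beta\, sgn\, x \in [1-|\beta|,\,1+|\beta|]$ is bounded away from $0$ and $\infty$, and $(g,\sigma^2) \in \mathcal{L}(\lambda,\Lambda)$, the pair $(\widetilde g,\widetilde\sigma^2)$ belongs to some $\mathcal{L}(\lambda',\Lambda')$, so (\ref{eqzeta}) has a unique weak solution and Lemma \ref{l1} applies.

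Next I would compute the three pieces appearing in (\ref{l1.2}). With $u_1 := u_1'(0) = 1-\beta$ and $u_2 := u_2'(0) = 1+\beta$, the ratio $(u_2 - u_1)/(u_2 + u_1) = \beta$, so the local-time coefficient comes out exactly $\beta$. The initial value satisfies $Y(0) = \kappa(\varphi(x)) = x$. Using $\mathbb{D}\kappa(x) = 1+\beta\, sgn\, x$ and the fact that $sgn\,\varphi(y) = sgn\, y$, the definition (\ref{kappafunc}) gives
$$\mathbb{D}\kappa(\varphi(y))\,\widetilde g(\varphi(y)) = (1+\beta\, sgn\, y)\cdot\frac{g(\kappa(\varphi(y)))}{1+\beta\, sgn\,\varphi(y)} = g(y),$$
so $\widehat{\alpha}(y) = g(y)$ (the $\mathbb{A}_\kappa$-contribution vanishes), and by the same cancellation $\widehat{\gamma}(y) = \sigma(y)$.

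Substituting these identifications into (\ref{l1.2}) yields
$$\kappa(\zeta(t)) = x + \beta\, L^{\kappa(\zeta)}(t,0) + \int_0^t g(\kappa(\zeta(s)))\,ds + \int_0^t \sigma(\kappa(\zeta(s)))\,dw(s),$$
which is precisely equation (\ref{eqxi}) with $\xi$ replaced by $\kappa(\zeta)$. I do not foresee a serious obstacle: the only points that require care are that the standing hypothesis $(\alpha,\gamma^2)\in\mathcal{L}(\lambda,\Lambda)$ of Lemma \ref{l1} transfers to $(\widetilde g,\widetilde\sigma^2)$, which is immediate from $|\beta|<1$, and the algebraic identifications $\beta = (u_2-u_1)/(u_2+u_1)$, $\widehat{\alpha}=g$, $\widehat{\gamma}=\sigma$, which are routine consequences of the definitions of $\kappa$ and $\widetilde{\,\cdot\,}$.
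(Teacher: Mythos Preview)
Your argument is correct and is exactly the route the paper intends: the paper states that Lemma~\ref{l2} ``follows directly from the lemma~\ref{l1}'' without further detail, and what you have written is precisely the instantiation of Lemma~\ref{l1} with $u=\kappa$, $X=\zeta$, together with the routine verifications $(u_2-u_1)/(u_2+u_1)=\beta$, $\mathbb{A}_\kappa\equiv 0$, and $\widehat{\alpha}=g$, $\widehat{\gamma}=\sigma$.
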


From the lemmas \ref{l1} and \ref{l2}, it follows that for the function $u$ defined by (\ref{funcu}) the following lemma is fulfilled.

\begin{lemma}\label{l3}
{ \it We assume that the coefficients of the process (\ref{eqxi}) satisfy Condition (I). Then the process
$\eta(t)=u(\xi(t))$ satisfies the following equation}
\begin{equation}\label{l3.1}
\eta(t)=u(x)+\frac{u_2-u_1+\beta(u_2+u_1)}{u_2+u_1+\beta(u_2-u_1)}L^\eta(t,0)+\int_0^t{g^*(\eta(s))}ds+\int_0^t\sigma^*(\eta(s))dw(s),
\end{equation}
{\it where we denote}
$$g^*(x)=\mathbb{D}u(u^{-1}(x))g(u^{-1}(x))+\frac{\sigma^2(u^{-1}(x))}{2}\mathbb{A}_u(u^{-1}(x)),$$
$$\sigma^*(x)=\mathbb{D}u(u^{-1}(x))\sigma(u^{-1}(x)).$$
\end{lemma}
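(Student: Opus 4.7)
The plan is to reduce Lemma \ref{l3} to Lemma \ref{l1} by first representing $\xi$ as a Lipschitz piecewise-linear transformation of an Ito diffusion, and then composing with $u$.

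First I would invoke Lemma \ref{l2} to write $\xi(t)=\kappa(\zeta(t))$, where $\zeta$ is the (unique weak) solution of the pure Ito equation (\ref{eqzeta}) with coefficients $\widetilde{g}$ and $\widetilde{\sigma}$. Then $\eta(t)=u(\xi(t))=U(\zeta(t))$ where $U:=u\circ\kappa$. The key observation is that $U$ again has the two-branch structure (\ref{funcu}): namely $U_1(x)=u_1((1-\beta)x)$ for $x\le 0$ and $U_2(x)=u_2((1+\beta)x)$ for $x\ge 0$, so that $U_1(0)=U_2(0)=0$ and the one-sided derivatives at $0$ are
$$U_1'(0)=(1-\beta)u_1,\qquad U_2'(0)=(1+\beta)u_2.$$

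Next I would apply Lemma \ref{l1} to the pair $(\zeta,U)$. The local-time identity (\ref{l1.1}) gives
$$L^\eta(t,0)=\frac{(1-\beta)u_1+(1+\beta)u_2}{2}L^\zeta(t,0),$$
and the representation (\ref{l1.2}) yields an equation for $\eta$ with local-time coefficient
$$\frac{U_2'(0)-U_1'(0)}{U_2'(0)+U_1'(0)}=\frac{u_2-u_1+\beta(u_2+u_1)}{u_2+u_1+\beta(u_2-u_1)},$$
which is exactly the coefficient appearing in (\ref{l3.1}).

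It remains to check that the drift $\widehat{\alpha}$ and diffusion $\widehat{\gamma}$ coming from Lemma \ref{l1} agree with $g^*$ and $\sigma^*$. For this I would use the identity $U^{-1}=\varphi\circ u^{-1}$, together with $\kappa\circ\varphi=\mathrm{id}$ and the fact that $\varphi$, $u$ and $u^{-1}$ all preserve the sign of their argument, so that $\mathrm{sgn}\,U^{-1}(\eta)=\mathrm{sgn}\,\eta$. A direct computation from the definitions of $\mathbb{D}U$ and $\mathbb{A}_U$ (treating the three cases $x>0$, $x<0$, $x=0$) shows
$$\mathbb{D}U(U^{-1}(\eta))=(1+\beta\,\mathrm{sgn}\,\eta)\,\mathbb{D}u(u^{-1}(\eta)),\qquad \mathbb{A}_U(U^{-1}(\eta))=(1+\beta\,\mathrm{sgn}\,\eta)^2\,\mathbb{A}_u(u^{-1}(\eta)),$$
while by (\ref{kappafunc}),
$$\widetilde{g}(U^{-1}(\eta))=\frac{g(u^{-1}(\eta))}{1+\beta\,\mathrm{sgn}\,\eta},\qquad \widetilde{\sigma}(U^{-1}(\eta))=\frac{\sigma(u^{-1}(\eta))}{1+\beta\,\mathrm{sgn}\,\eta}.$$
The factors $(1+\beta\,\mathrm{sgn}\,\eta)$ cancel in both the drift and the diffusion, recovering $g^*$ and $\sigma^*$ exactly as defined in the statement.

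The only delicate point, and the one I expect to require some care, is verifying the chain-rule-type identities for $\mathbb{D}U$ and $\mathbb{A}_U$ at the branch point $x=0$: the symmetric derivative and the Dirac part of $\mathbf{n}_U$ must be computed from the definition rather than by naive differentiation, and one must check that the Dirac mass at $0$ coming from $\mathbf{n}_U$ is already accounted for by the local-time term (and so should not appear again in $\mathbb{A}_U$). Everything else is a routine algebraic manipulation comparing the output of Lemma \ref{l1} with the coefficients written in (\ref{l3.1}).
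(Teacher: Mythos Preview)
Your proposal is correct and follows essentially the same route as the paper: the paper also writes $\eta(t)=u(\kappa(\zeta(t)))=\tau(\zeta(t))$ with $\tau=u\circ\kappa$ (your $U$), computes $\tau_1'(0)=(1-\beta)u_1$, $\tau_2'(0)=(1+\beta)u_2$, derives the same chain-rule identities $\mathbb{D}\tau(x)=(1+\beta\,\mathrm{sgn}\,x)\mathbb{D}u(\kappa(x))$ and $\mathbb{A}_\tau(x)=(1+\beta\,\mathrm{sgn}\,x)^2\mathbb{A}_u(\kappa(x))$, and then applies Lemma~\ref{l1} to $(\zeta,\tau)$ to obtain (\ref{l3.1}) after the same cancellation of the $(1+\beta\,\mathrm{sgn})$ factors. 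The only cosmetic difference is notation ($\tau$ versus $U$); your remark about the Dirac mass at $0$ is a reasonable caution but is handled automatically since $\mathbb{A}_\tau$ is defined from the absolutely continuous part of $\mathbf{n}_\tau$ only.
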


\begin{proof}

We observe that equalities
$$\eta(t)=u(\xi(t))=u(\kappa(\zeta(t)))=\tau(\zeta(t))$$
take place, where function $\tau(x)$ is defined by equality
$$\tau(x)=u(\kappa(x))=\left\{
\begin{array}{cc}
u_1\big((1-\beta)x\big), &\text{ if  } x\leq0 \\
u_2\big((1+\beta)x\big), &\text{ if  } x\geq0
\end{array}.\right.$$
Thus we obtain the following results
$$\begin{aligned}
&\hskip 60 pt \tau_1(0)=\tau_2(0)=0\,;\;\\
&\tau'_1(x)=(1-\beta)u'_1\big((1-\beta)x\big)>0\,,\; \tau'_2(x)>0\,;\;\\
&\tau'_1(0)=\tau_1=(1-\beta)u_1\,,\; \tau'_2(0)=\tau_2=(1+\beta)u_2\,;
\end{aligned}$$
there exists an inverse function of function $\tau(x)$ that is defined as
$\tau^{-1}(x)=\varphi(u^{-1}(x))\,;$\\
and there exist second derivatives $\tau''_1(x)\,, \tau''_2(x)\,.$

Further we get
$$\begin{aligned}
&\mathbb{D}\tau(x)=(1+\beta sgn x)\mathbb{D}u(\kappa(x));\\
&\mathbb{A}_\tau(x)=(1+\beta sgn x)^2\mathbb{A}_u(\kappa(x));\\
\mathbf{n}_{\tau}(dx)=&(\tau_2-\tau_1)\delta_0(x)dx+(1+\beta sgn x)^2\mathbb{A}_u(\kappa(x))dx.
\end{aligned}$$

In such a way we conclude that the requirements of lemma \ref{l1} on equations (\ref{eqzeta}) and (\ref{l3.1}) with the function $\tau(x)$ are fulfilled. Therefore we obtain
$$\widehat{\alpha}(x)=\mathbb{D}\tau(\tau^{-1}(x))\widetilde{g}(\tau^{-1}(x))+\frac{1}{2}\widetilde{\sigma}^2(\tau^{-1}(x))
\mathbb{A}_\tau(\tau^{-1}(x))=$$
$$=(1+\beta sgn \tau^{-1}(x))\mathbb{D}u(\kappa(\tau^{-1}(x)))\frac{g(\kappa(\tau^{-1}(x)))}{1+\beta sgn \tau^{-1}(x)}+$$
$$+\frac{\sigma^2(\kappa(\tau^{-1}(x)))}{2(1+\beta sgn (\tau^{-1}(x)))^2}
(1+\beta sgn \tau^{-1}(x))^2\mathbb{A}_u(\kappa(\tau^{-1}(x)))=$$
$$=\mathbb{D}u(u^{-1}(x))g(u^{-1}(x))+\frac{\sigma^2(u^{-1}(x))}{2}\mathbb{A}_u(u^{-1}(x))=g^*(x);$$
$$\widehat{\gamma}(x)=\mathbb{D}\tau(\tau^{-1}(x))\widetilde{\sigma}(\tau^{-1}(x))=$$
$$=(1+\beta sgn \tau^{-1}(x))\mathbb{D}u(\kappa(\tau^{-1}(x)))\frac{\sigma(\kappa(\tau^{-1}(x)))}{1+\beta sgn \tau^{-1}(x)}=$$
$$=\mathbb{D}u(u^{-1}(x))\sigma(u^{-1}(x))=\sigma^*(x).$$
Finally by applying the lemma \ref{l1} we get statement of the lemma \ref{l3}.
\end{proof}

\section{Main result}

Let us consider the following Ito's stochastic differential equation
\begin{equation}\label{itoeqbg}
v_\varepsilon(t)=x+\int_0^t\big(b_\varepsilon(v_\varepsilon(s))+
g_\varepsilon(v_\varepsilon(s))\big)ds+\int_0^t{\sigma_\varepsilon
(v_\varepsilon(s))}dw(s).
\end{equation}

For measurable functions $g_\varepsilon(x), b_\varepsilon(x), \sigma_\varepsilon(x)$ we introduce the following condition.

\medskip
{\bf Condition (II).}
\begin{itemize}
\item[$II_1.$]
For every $\varepsilon>0$ there exists a unique weak solution of stochastic equation.

\item[$II_2.$] The pair of the function $(g_\varepsilon, \sigma_\varepsilon^2)\in \mathcal{L}(\lambda, \Lambda)$.

\item[$II_3.$] For every $x\in \mathbb{R}$
$$\left|\int_0^x{\frac{b_\varepsilon(y)}{\sigma^2_\varepsilon(y)}}dy\right|\leq \Lambda \,.$$
\end{itemize}
\medskip

We assume that coefficients of stochastic equation (\ref{itoeqbg}) satisfy Condition~(II). By $\nu_{\varepsilon}$ we denote measure on functional space $(\mathbb{C}[0,T], \mathcal{C}_T)$ generated by process $v_{\varepsilon}$.

Let us define functions $F_\varepsilon(x)$ and $f_\varepsilon(x)$ as follows
\begin{equation}\label{deffF}
F_{\varepsilon}(x)=\exp\Big\{-2\int\limits_0^x{\frac{b_{\varepsilon}
(y)}{a_{\varepsilon}(y)}dy}\Big\}, \; f_{\varepsilon}(x)=\int\limits_0^x{F_{\varepsilon}(y)}dy.
\end{equation}
Since $f_{\varepsilon}(x) $, as a function of $x$, is monotonically increasing, there exists an inverse function, which we will denote as $f_{\varepsilon}^{-1}(x)$.

Further we assume that
\begin{equation}\label{limoff}\lim\limits_{\varepsilon\rightarrow0}f_{\varepsilon}(x)=f(x)=\left
\{\begin{array}{cc}
f_1(x), & \text{if  }x\leq0 \\
f_2(x), & \text{if  }x\geq0
\end{array},\right.
\end{equation}
and we assume also that functions $f_1(x), f_2(x)$ are twice differentiable functions such as
 $f_1(0)=f_2(0)=0;$\, $f'_1(x)>0$ (for $x\leq0$) and $f'_2(x)>0$ (for $x\leq0$); and let's denote  $f_1=f'_1(0)\, ,  \,f_2=f'_2(0)$.

If the condition $II_3$ is fulfilled then the limit in (\ref{limoff}) exists uniformly on compact sets. Then $\displaystyle\lim_{\varepsilon\to 0}f_{\varepsilon}^{-1}(x)=f^{-1}(x)$ uniformly on compact sets and $f^{-1}(x)$ be an inverse function for function $f(x)$ defined in (\ref{limoff}).

We will prove below that the limit measure $\nu$ for measures $\nu_{\varepsilon}$ on the space $(\mathbb{C}[0, T], \mathcal{C}_T)$ is generated by such stochastic process involving local time
\begin{equation}\label{limproc}
v(t)=x+\alpha L^v(t,0)+\int_0^tg(v(s))ds+\int_0^t\sigma(v(s))dw(s).
\end{equation}

\medskip
\begin{theorem}\label{th1} {\it We assume that the coefficients of process (\ref{itoeqbg}) satisfy Condition (II), the coefficients of process (\ref{limproc}) satisfy Condition (I), and function $f(x)$ is defined in (\ref{limoff}). For weak convergence of measures generated by processes $\nu_{\varepsilon}\Rightarrow\nu$ if $\varepsilon\rightarrow0$, it's necessary and sufficient fulfilling of the following requirements:}
$$ \begin{aligned} &\texttt{\emph{a)}}\hskip 20 pt\displaystyle \alpha=\frac{f_1-f_2}{f_1+f_2};\\
&\texttt{\emph{aa)}}\hskip 15 pt \displaystyle \lim\limits_{\varepsilon\rightarrow0}\int\limits_0^x\frac{1}{F_
{\varepsilon}(y)a_{\varepsilon}(y)}dy=\int\limits_0^x\frac{1}
{a(y)\mathbb{D}f(y)}dy \;\;\; \text{\emph{for \; every}} \;\; x\in \mathbb{R} ;\\
&\texttt{\emph{aaa)}}\hskip 10 pt \displaystyle \lim\limits_{\varepsilon\rightarrow0}\int\limits_0^x\frac{g_
{\varepsilon}(y)}{a_{\varepsilon}(y)}dy=\int\limits_0^x\Big[\frac
{g(y)}{a(y)}+\frac{1}{2}\frac{\mathbb{A}_f(y)}{\mathbb{D}f(y)}\Big]dy \;\;\; \text{\emph{for \; every}} \;\; x\in \mathbb{R} .
\end{aligned}$$
\end{theorem}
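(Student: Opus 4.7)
The plan is to reduce the statement to a weak-convergence problem for a pair of classical (driftless-scale) Ito diffusions via the scale-function transformation $y_\varepsilon(t)=f_\varepsilon(v_\varepsilon(t))$. Since $F_\varepsilon=f_\varepsilon'$ is absolutely continuous and satisfies $\tfrac12 F_\varepsilon'(x)\,a_\varepsilon(x)=-b_\varepsilon(x)\,F_\varepsilon(x)$ a.e., Ito's formula eliminates the $b_\varepsilon$-drift and yields
\begin{equation*}
y_\varepsilon(t)=f_\varepsilon(x)+\int_0^t\widehat g_\varepsilon(y_\varepsilon(s))\,ds+\int_0^t\widehat\sigma_\varepsilon(y_\varepsilon(s))\,dw(s),
\end{equation*}
with $\widehat g_\varepsilon(y)=F_\varepsilon(f_\varepsilon^{-1}(y))\,g_\varepsilon(f_\varepsilon^{-1}(y))$ and $\widehat\sigma_\varepsilon(y)=F_\varepsilon(f_\varepsilon^{-1}(y))\,\sigma_\varepsilon(f_\varepsilon^{-1}(y))$. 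Condition $II_3$ gives the uniform two-sided bound $e^{-2\Lambda}\le F_\varepsilon\le e^{2\Lambda}$, so $(\widehat g_\varepsilon,\widehat\sigma_\varepsilon^2)\in\mathcal{L}(\lambda',\Lambda')$ with constants independent of $\varepsilon$. The uniform convergence $f_\varepsilon^{-1}\to f^{-1}$ on compacts, together with the continuous mapping theorem, makes $\nu_\varepsilon\Rightarrow\nu$ equivalent to $y_\varepsilon\Rightarrow y:=f(v)$.

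On the limit side I would apply Lemma~\ref{l3} to the process $v$ of (\ref{limproc}) with $u=f$: the local-time coefficient produced in (\ref{l3.1}),
\begin{equation*}
\frac{f_2-f_1+\alpha(f_2+f_1)}{f_2+f_1+\alpha(f_2-f_1)},
\end{equation*}
vanishes precisely when $\alpha=(f_1-f_2)/(f_1+f_2)$, so condition \emph{a)} is exactly what is needed for $y=f(v)$ to be a classical Ito diffusion with coefficients $g^*$ and $\sigma^*$ as in Lemma~\ref{l3}. If \emph{a)} fails, a nonzero local-time term persists in the equation for $y$, which cannot arise as a limit of the driftless diffusions $y_\varepsilon$, forcing \emph{a)} as a necessary condition.

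Both $y_\varepsilon$ and $y$ are now bounded, non-degenerate Ito diffusions without local time, and the Kulinich--Makhno convergence criterion (see \cite{Kul83,Makhno00,Makhno12}) tells us that $y_\varepsilon\Rightarrow y$ is equivalent to
\begin{equation*}
\int_0^x\frac{dy}{\widehat\sigma_\varepsilon^2(y)}\to\int_0^x\frac{dy}{(\sigma^*)^2(y)},\qquad\int_0^x\frac{\widehat g_\varepsilon(y)}{\widehat\sigma_\varepsilon^2(y)}\,dy\to\int_0^x\frac{g^*(y)}{(\sigma^*)^2(y)}\,dy
\end{equation*}
for every $x\in\mathbb{R}$. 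The substitutions $z=f_\varepsilon^{-1}(y)$ on the left (with $dy=F_\varepsilon(z)\,dz$) and $z=f^{-1}(y)$ on the right (with $dy=\mathbb{D}f(z)\,dz$) convert the first identity into $\int_0^{f_\varepsilon^{-1}(x)}\frac{dz}{F_\varepsilon(z)a_\varepsilon(z)}\to\int_0^{f^{-1}(x)}\frac{dz}{\mathbb{D}f(z)a(z)}$, and the explicit form of $g^*,\sigma^*$ from Lemma~\ref{l3} turns the second into the integrated form of \emph{aaa)}; uniform convergence of $f_\varepsilon^{-1}$ then lets me shift the varying upper limit back to $x$, yielding exactly \emph{aa)} and \emph{aaa)}.

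The main obstacle is the necessity direction of the Kulinich--Makhno criterion: while sufficiency is routine from tightness of $\{y_\varepsilon\}$ (provided by the uniform bounds on the coefficients) and identification of the limit via the martingale problem, the converse requires extracting pointwise convergence of the integrated coefficients from mere weak convergence of the path laws. I would handle this by applying Tanaka's formula to carefully chosen test functions of $y_\varepsilon$ and using the occupation-time formula to express the above integrals as expectations of path functionals that are continuous with respect to $\Rightarrow$, so their limits are forced to coincide with the corresponding functionals of $y$.
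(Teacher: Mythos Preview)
Your proposal is correct and follows essentially the same route as the paper: transform by $f_\varepsilon$ to eliminate the $b_\varepsilon$-drift, apply Lemma~\ref{l3} on the limit side to see that condition~a) is precisely the vanishing of the local-time coefficient for $y=f(v)$, then invoke Makhno's necessary-and-sufficient criterion \cite{Makhno00} and change variables back to obtain aa) and aaa). The only differences are cosmetic: the paper cites \cite[lemma~5]{Makhno04} where you sketch a continuous-mapping argument for the equivalence $\nu_\varepsilon\Rightarrow\nu\iff y_\varepsilon\Rightarrow y$, and it simply quotes \cite[theorem~1]{Makhno00} for both directions of the criterion, so the re-derivation of necessity in your final paragraph is unnecessary.
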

\medskip

\begin{proof} Let's define the following process $\pi_{\varepsilon}(t)=f_{\varepsilon}(v_{\varepsilon}(t))$
and denote by $\varsigma_{\varepsilon}$ the measure generated by process $\pi_\varepsilon$ on the space $(\mathbb{C}[0,T], C_t)$. It follows from Ito's formula that for the process $\pi_{\varepsilon}(t)$ the following equation
\begin{equation}\label{e3.2.17}
\pi_{\varepsilon}(t)=\pi_{\varepsilon}(0)+\int_0^t{\widehat{g_{\varepsilon}}(\pi_{\varepsilon}(s))}ds+
\int_0^t{\widehat{\sigma_{\varepsilon}}(\pi_{\varepsilon}(s))}dw(s)
\end{equation}
is fulfilled. Here we denote
$$\widehat{g_{\varepsilon}}(x)=F_{\varepsilon}(f^{-1}_{\varepsilon}(x))g_{\varepsilon}(f^{-1}_{\varepsilon}(x))\:,\;
\widehat{\sigma_{\varepsilon}}(x)=F_{\varepsilon}(f^{-1}_{\varepsilon}(x))\sigma_{\varepsilon}(f^{-1}_{\varepsilon}(x)).$$

Let's define $\pi(t)=f(v(t))$ and denote by $\varsigma$ the measure generated by process $\pi$ on the functional space $(\mathbb{C}[0, T], C_t)$.

Further we will use \cite[lemma 5]{Makhno04}, so we need to verify that requirements \texttt{a)}, \texttt{aa)}, \texttt{aaa)} are necessary and sufficient conditions for convergence of measures $\varsigma_{\varepsilon}$ to the measure $\varsigma$. Due to the fact that the Condition (I) is fulfilled then the pair of functions $(\widehat{g_\varepsilon}, \widehat{\sigma_\varepsilon}^2)\in \mathcal{L}(a, A)$ for some constants $a, A$.

It follows from the lemma \ref{l3} that process $\pi(t)=f(v(t))$ satisfies equation (\ref{l3.1}) with such expression for the constant at the local time (one should substitute constants $u_1, u_2, \beta$ by constants
 $f_1, f_2, \alpha$ respectively):
$$\pi(t)=\pi(0)+\frac{f_2-f_1+\alpha(f_2+f_1)}{f_2+f_1+\alpha(f_2-f_1)}
L^\pi(t,0)+\int_0^t{\widehat{g}(\pi(s))}ds+\int_0^t\widehat{\sigma}(\pi(s))dw(s),$$
where we denote
$$\widehat{g}(x)=\mathbb{D}f(f^{-1}(x))g(f^{-1}(x))+\frac{\sigma^2(f^{-1}(x))}{2}\mathbb{A}_f(f^{-1}(x)),$$
$$\widehat{\sigma}(x)=\mathbb{D}f(f^{-1}(x))\sigma(f^{-1}(x)).$$

From another hand, it follows from the results by S. Makhno \cite{Makhno00} that the limit process for $\pi_{\varepsilon}(t)$ is a solution of Ito's stochastic equations, so constant at the local time of limit process has to be equal to zero. In such a way we can calculate a constant $\alpha$. Thus we conclude that requirement \texttt{a)} of the theorem has to be fulfilled.

Further from \cite[theorem 1]{Makhno00}, we conclude that the following requirements are necessary and sufficient conditions for convergence of measures $\varsigma_{\varepsilon}\Rightarrow\varsigma$ if $\varepsilon\rightarrow0$:
$$\int_0^x\frac{1}{\widehat{\sigma_{\varepsilon}}^2(y)}dy=\int_0^x\frac{1}{F^2_{\varepsilon}(f^{-1}_{\varepsilon}(y))\sigma^2_
{\varepsilon}(f^{-1}_{\varepsilon}(y))}dy=\int_0^{f^{-1}_{\varepsilon}(x)}\frac{1}{F_{\varepsilon}(z)\sigma^2_
{\varepsilon}(z)}dz\rightarrow$$
$$\rightarrow\int_0^{f^{-1}(x)}\frac{1}{\mathbb{D}f(z)\sigma^2(z)}dz=\int_0^x\frac{1}{\sigma^2(f^{-1}(y))\big[\mathbb{D}f(f^{-1}(y))\big]^2}dy=\int_0^x\frac{1}{\widehat{\sigma}^2(y)}dy\;;$$
$$\int_0^x\frac{\widehat{g_{\varepsilon}}(y)}{\widehat{\sigma_{\varepsilon}}^2(y)}dy=
\int_0^x\frac{g_{\varepsilon}(f_{\varepsilon}^{-1}(y))}{F_{\varepsilon}
(f^{-1}_{\varepsilon}(y))\sigma^2_{\varepsilon}(f^{-1}_{\varepsilon}(y))}dy=\int_0^{f_{\varepsilon}^{-1}(x)}\frac{g_{\varepsilon}
(z)}{\sigma^2_{\varepsilon}(z)}dz\rightarrow$$
$$\rightarrow\int_0^{f^{-1}(x)}\Big[\frac{g(z)}{\sigma^2(z)}+\frac{1}{2}\frac{\mathbb{A}_f(z)}{\mathbb{D}f(z)}\Big]dz=
\int_0^x\Big[\frac{g(f^{-1}(y))}{\sigma^2(f^{-1}(y))}+\frac{1}{2}\frac{\mathbb{A}_f(f^{-1}(y))}{\mathbb{D}f(f^{-1}(y))}\Big]\frac{dy}{\mathbb{D}f(f^{-1}(y))}=$$
$$=\int_0^x\Big[\frac{g(f^{-1}(y))}{\sigma^2(f^{-1}(y))\mathbb{D}f(f^{-1}(y))}+\frac{1}{2}\frac{\mathbb{A}_f(f^{-1}(y))}
{\big[\mathbb{D}f(f^{-1}(y))\big]^2}\Big]dy=\int_0^x\frac{\widehat{g}(y)}{\widehat{\sigma}^2(y)}dy\;,$$
that is equivalent to the requirements \texttt{aa)}, \texttt{aaa)} of the theorem.
\end{proof}

\section*{Acknowledgements}

The author expresses his heartfelt gratitude to the brave soldiers of the Ukrainian Armed Forces who protect the lives of the author and his family from Russian bloody murderers since 2014.

%

\end{document}